%

\documentclass[12pt,reqno,a4paper]{amsart}
\usepackage{amsmath,amssymb,amsfonts,amscd}
\usepackage[mathscr]{eucal}
\usepackage{comment}
\usepackage{hyperref}

\topmargin=0cm 
\oddsidemargin=0cm \advance\textwidth by 1.3in
\evensidemargin=0cm 

\setcounter{tocdepth}{1} 

\date{12  (25) September  2014; last modified 28 August (10 September) 2019}  

\author{Theodore~Th. Voronov}
\address{{Department of Mathematics,  University of Manchester,    Manchester,   M13 9PL,  UK}}
\email{theodore.voronov@manchester.ac.uk}
\address{%
{Faculty of Physics, Tomsk State University, Tomsk, 634050, Russia}}

\title[L-infinity bialgebroids and homotopy Poisson structures]{L-infinity bialgebroids and homotopy Poisson structures on supermanifolds}



\newtheorem{theorem}{Theorem}
\newtheorem{proposition}{Proposition}

\theoremstyle{definition}
\newtheorem{definition}{Definition}
\newtheorem{example}{Example}
\newtheorem{remark}{Remark}

\def\co{\colon\thinspace}

\renewcommand{\geq}{\geqslant}

\DeclareMathOperator{\ad}{ad}

\DeclareMathOperator{\Hom}{Hom}

\newcommand{\fun}{C^{\infty}}

\newcommand{\der}[2]{{\frac{\partial {#1}}{\partial {#2}}}}

\newcommand{\lder}[2]{{\partial {#1}/\partial {#2}}}

\newcommand{\R}[1]{{\mathbb R}^{#1}}

\newcommand{\Z}{{\mathbb Z_{2}}}
\newcommand{\ZZ}{{\mathbb Z}}

\newcommand{\p}{\partial}

\renewcommand{\a}{\alpha}

\newcommand{\e}{\varepsilon}
\newcommand{\s}{\sigma}
\newcommand{\f}{{\varphi}}
\renewcommand{\O}{\Omega}

\renewcommand{\o}{\omega}

\newcommand{\h}{\eta}


\newcommand{\la}{{\lambda}}
\newcommand{\x}{{\xi}}
\renewcommand{\d}{\delta}

\renewcommand{\L}{{\Lambda}}

\newcommand{\ft}{{\tilde f}}

\newcommand{\at}{{\tilde a}}
\newcommand{\bt}{{\tilde b}}
\newcommand{\ct}{{\tilde c}}
\newcommand{\ut}{{\tilde u}}

\newcommand{\Xt}{{\tilde X}}

\newcommand{\Ft}{{\tilde F}}

\newcommand{\Pt}{{\tilde P}}

\newcommand{\lat}{{\tilde \lambda}}

\DeclareMathOperator{\Vect}{\mathrm{Vect}}

\DeclareMathOperator{\Mult}{\mathfrak{A}}

\newcommand{\lsch}{{[\![}}
\newcommand{\rsch}{{]\!]}}

\newcommand{\Sinf}{S_{\infty}}
\newcommand{\Pinf}{P_{\infty}}
\newcommand{\Linf}{L_{\infty}}


\begin{document}

\begin{abstract}
We generalize to the homotopy case a result  of K.~Mackenzie and P.~Xu  on relation  between Lie bialgebroids and Poisson geometry. For a homotopy Poisson structure on a supermanifold $M$, we show that $(TM, T^*M)$ has a canonical structure of an $L_{\infty}$-bialgebroid. (Higher Koszul brackets on forms   introduced earlier by H.~Khudaverdian and the author  are part of one of its manifestations.) The underlying general construction is that of a ``(quasi)triangular'' $L_{\infty}$-bialgebroid,  which is a specialization of a ``(quasi)triangular'' homotopy Poisson structure.  We define both here.
\end{abstract}

\maketitle

\tableofcontents

\section{Introduction}


In this text, we outline a simple construction that generalizes ``quasi-triangular structure'' as in Drinfeld's quasi-triangular Lie bialgebras and its generalization to Lie bialgebroids due to K.~Mackenzie and P.~Xu~\cite{mackenzie:bialg}. (See Theorem~\ref{thm.main} in Section~\ref{sec.quasi}.) We came to it in relation with our notion of higher Koszul brackets~\cite{tv:higherpoisson}. The idea comes from the following observations.  For a Poisson manifold $M$, its cotangent bundle $T^*M$ becomes a Lie algebroid. This is classical, and a lot of Poisson geometry rests on this construction. (It is useful to consider various manifestations of  Lie algebroid structure, and in this case one of them is the Koszul bracket, i.e. an odd Poisson bracket induced in the algebra of differential forms on $M$.) One can observe that taken together with the natural Lie algebroid structure in the tangent bundle $TM$, this makes $(TM, T^*M)$ a Lie bialgebroid (and various related maps are in fact Lie bialgebroid morphisms). Mackenzie and Xu in~\cite{mackenzie:bialg} (see also~\cite{mackenzie:book2005}) noticed that for this ``cotangent Lie bialgebroid'' construction for a Poisson manifold there is an abstract form that also incorporates the classical notion due to Drinfeld of a (quasi)triangular Lie bialgebra. (As it is known, Drinfeld's quasi-triangular Lie bialgebras are distinguished by the fact that one of the Lie brackets is not just a cocycle with respect to the other, but a coboundary, --- hence their alternative name as coboundary Lie bialgebras.) In~\cite{tv:higherpoisson}, H.~Khudaverdian and the author considered the homotopy analog of the cotangent Lie algebroid construction. Namely, we considered homotopy Poisson manifolds\footnote{as well as \emph{homotopy symplectic manifolds}; a close notion became popular recently under the name ``shifted symplectic structure''} and for them defined an $\Linf$-algebroid structure in the cotangent bundle.\footnote{Paper~\cite{tv:higherpoisson} seems also to be one of the first instances when $\Linf$-algebroids were introduced.} (One manifestation of which is the higher Koszul brackets introduced in~\cite{tv:higherpoisson}, i.e. an $\Sinf$-algebra structure for differential forms on a homotopy Poisson manifold.) Thus for a homotopy Poisson manifold $M$, the pair $(TM, T^*M)$ has two $\Linf$-algebroid structures (one for $TM$ is the ordinary canonical Lie algebroid structure), which are compatible in some precise sense.
This makes it possible to describe this pair as an \emph{$\Linf$-bialgebroid}. Moreover, we observed the mechanism behind this $\Linf$-bialgebroid structure (induced by a homotopy Poisson structure), which can be formulated in a very simple geometric form (as a ``gauge transformation of a master Hamiltonian'') and which embraces the cases of classical Drinfeld's quasi-triangular Lie bialgebras and the   quasi-triangular Lie bialgebroids of Mackenzie--Xu. Exposition of this construction is the main purpose of this text.

We would like to make a remark on the notion of an $\Linf$-bialgebroid. In some cases, such as described above, it is self-evident when a structure is of that type. The most immediate ``homotopy'' generalization of a structure of a Lie bialgebroid  seems to be the case of a pair of vector bundles in duality $(E,E^*)$, both endowed with $\Linf$-structures, and so that a compatibility condition is satisfied. (The simplest way of formulating this condition is $(H_{E},H_{E^*})=0$, where $H_{E}$ and $H_{E^*}$ are the ``master Hamiltonians'' for the $\Linf$-structures in $E$ and $E^*$ respectively.) The case of the cotangent $\Linf$-bialgebroid of a homotopy Poisson manifold is of this type. One can argue however that this is not the most general definition possible; when I was speaking about the cotangent $\Linf$-bialgebroid construction at a conference on homological methods in summer 2014~\cite{tv:talk-july2014},  A.~Voronov    remarked that such   a structure (of a pair of compatible $\Linf$-algebroids) was possibly too restrictive as not allowing for   brackets of sections ``across'' $E$ and $E^*$, only within each vector bundle. A more general structure of a homotopy Lie bialgebroid was  later considered in~\cite{bashkirov-voronov:lbalg}.
Actually, it is   easy to provide for all possible brackets with multiple arguments taken from sections of $E$ and $E^*$ by using the language of  Hamiltonians. One just takes a general odd   ``master Hamiltonian''  $H$ on $T^*(\Pi E)\cong T^*(\Pi E^*)$ satisfying $(H,H)=0$, instead of a Hamiltonian of the form $H_E+H_{E^*}$ that would correspond to a pair of structures on $E$ and $E^*$. Algebraic structure of brackets that would result from that was studied in the PhD thesis of our student M.~Peddie~\cite{peddie:thesis} (see also~\cite{peddie:loday}). A remark to be made is that defining an $\Linf$-bialgebroid by a single geometric object such as a master Hamiltonian on the common cotangent or a BV type operator, in spite of the generality this suggests, can obscure the original idea that was underlying the introduction of Lie bialgebras in integrable systems theory, namely, of   \emph{two} compatible brackets structures that hence can be arranged in a pencil  (i.e. a one-parameter family). (It also blurs the difference between  a ``bi-'' object and its ``double''.)  Therefore, we think that the notion of an $\Linf$-bialgebroid should always include either a pair of compatible structures or other way of incorporating a pencil of structures. The ``quasi-triangular'' construction of homotopy Poisson brackets and $\Linf$-bialgebroids given here satisfies this requirement.

Most  of the text  was  written in August 2014 (apart from this introduction and the references to works after 2014). For various reasons, the main of which was the author's pre-occupation with developing the theory of \emph{thick morphisms}, the text was not published at the time. The   construction of ``(quasi)triangular'' homotopy Poisson structures and   ``(quasi)triangular'' $L_{\infty}$-bialgebroids  as a special case should be useful and have not been described anywhere except for brief references e.g. in~\cite{tv:nonlinearpullback} and \cite{tv:microformal}. Because of that, I decided  to make the text  available with minimal amendments (so in many places it remains  raw).

\emph{Thick} (or \emph{microformal}) \emph{morphisms} of supermanifolds were discovered shortly after the first version of this text was written. As it is clear to the author, any complete theory of homotopy Poisson structures and $\Linf$-(bi)algebroids has to include thick morphisms. As explained, they are not mentioned in what follows. Merging the subject of the present paper with microformal geometry  will be done elsewhere  as we hope. For references on thick morphisms see~\cite{tv:nonlinearpullback, tv:microformal, tv:gradedmicro}. See also~\cite{tv:highkosz}.

\subsection*{Notations}
When we speak about vector spaces and algebras we  assume that they are $\Z$-graded, e.g., $V=V_0\oplus V_1$\,. A $\Z$-grading (\emph{parity}), which is always assumed, should be distinguished from a $\ZZ$-grading, which may be present or not.
According to our  philosophy, a  $\ZZ$-grading  is a special feature which may arise from a linear structure such as that of a vector bundle and in general should be  regarded as its replacement.
When both kinds of gradings are present, they are, generally, independent. (On graded geometry see~\cite{tv:graded} and also~\cite{tv:gradedmicro}.)

Parity of an element is denoted by the tilde over the element's symbol.

\section{Preliminaries}

\subsection{Lie brackets and change of parity. Poisson and Schouten algebras}
For reference, we recall the definitions of a Lie superalgebra, Poisson algebra and Schouten (Gerstenhaber) algebra, with a particular emphasis on signs. In what follows, we use a generic notation with the square bracket as a Lie bracket and with the curly bracket as a Poisson or Schouten bracket. In particular examples, different notations for brackets may be used.

Recall that a  \emph{Lie superalgebra} $L=L_0\oplus L_1$ has a bracket with the following properties. It is even: $[L_i,L_j]\subset L_{i+j}$;   bilinear,   in particular,
\begin{equation}\label{eq.liebilin}
    [\la a,b]=\la [a,b],  \quad [a,b\la]= [a,b]\la\,,
\end{equation}
for a scalar $\la$ of arbitrary parity;  antisymmetric:
\begin{equation}\label{eq.lieantisym}
    [a,b]=-(-1)^{\at\bt}[b,a]\,;
\end{equation}
and satisfies the Jacobi identity:
\begin{equation}\label{eq.liejac}
    [a,[b,c]]=[[a,b],c]+(-1)^{\at\bt}[b,[a,c]]\,.
\end{equation}

Parity shift leads to a variant of a Lie superalgebra with an odd bracket. It exists in two versions. (Note that, in general, parity shift in combination with a   redefinition of the common sign transforms antisymmetric operations into symmetric and \emph{vice versa}.)

\emph{Version 1. } (Antisymmetric bracket.)
An \emph{odd Lie superalgebra}  $L$ has an odd bracket,  $[L_i,L_j]\subset L_{i+j+1}$, which is bilinear in the sense that
\begin{equation}\label{eq.lieoddbilin}
    [\la a,b]=\la[a,b]\,, \quad [a,b\la]=[a,b]\la\,,
\end{equation}
(in the same form as~\eqref{eq.liebilin})
and which satisfies the antisymmetry
\begin{equation}\label{eq.lieoddantisym}
    [a,b]=-(-1)^{(\at+1)(\bt+1)}[b,a]\,
\end{equation}
and   the Jacobi identity
\begin{equation}\label{eq.lieoddjac}
    [a,[b,c]]=[[a,b],c]+(-1)^{(\at+1)(\bt+1)}[b,[a,c]]\,
\end{equation}
w.r.t. the opposite parity.
One can see that here $[a\la,b]=(-1)^{\lat}[a,\la b]$, so the parity of the bracket  `sits' on the central comma.

\emph{Version 2. } (Symmetric bracket.)
An \emph{odd Lie superalgebra (symmetric version)} or a \emph{Lie antialgebra}  $L$ has an odd bracket,  $[L_i,L_j]\subset L_{i+j+1}$, satisfying   bilinearity in the form
\begin{equation}\label{eq.antiliebilin}
    [\la a,b]=(-1)^{\lat}\la[a,b]\,, \quad [a,b\la]=[a,b]\la\,,
\end{equation}
which is  symmetric:
\begin{equation}\label{eq.antiliesym}
    [a,b]=(-1)^{\at\bt}[b,a]\,,
\end{equation}
and  satisfies the Jacobi identity in the form
\begin{equation}\label{eq.antiliejac}
    [a,[b,c]]=(-1)^{\at+1}[[a,b],c]+(-1)^{(\at+1)(\bt+1)}[b,[a,c]]\,.
\end{equation}
The parity here  `sits'   on the opening bracket $[$\,, which should be regarded as an odd symbol. The form of the Jacobi identity for this version looks less attractive than that for the antisymmetric version, but in fact~\eqref{eq.antiliejac} can be equivalently re-written as
\begin{equation}
    [[a,b]c]+(-1)^{\bt\ct}[[a,c],b]+(-1)^{\at(\bt+\ct)+\bt\ct}[[c,b],a]=0\,,
\end{equation}
where we see the sum over   $(2,1)$-shuffles with the signs  due to the parities only (no input of permutation signs). In this form it  most conveniently extends to the `strongly homotopy' case (see below).

The two versions of odd Lie brackets can be transformed into each other by a re-definition of the common sign. If $[\_\,,\_]$ is an odd bracket on $L$ satisfying~\eqref{eq.lieoddbilin}, \eqref{eq.lieoddantisym}, \eqref{eq.lieoddjac}, then a  bracket $[\_\,,\_]'$ defined by
\begin{equation*}
    [a,b]':=(-1)^{\at}[a,b]
\end{equation*}
(or by $[a,b]':=(-1)^{\at+1}[a,b]$ \,---\,both options work)
satisfies~\eqref{eq.antiliebilin}, \eqref{eq.antiliesym} and \eqref{eq.antiliejac}; and the other way round.

If $L$ is a Lie superalgebra, with an even bracket satisfying the usual properties~\eqref{eq.liebilin}, \eqref{eq.lieantisym} and \eqref{eq.liejac}, then the formula
\begin{equation}\label{eq.brackoddanti}
    [\Pi a,\Pi b]:=\Pi [a,b]
\end{equation}
defines on $\Pi L$ an odd bracket satisfying~\eqref{eq.lieoddbilin}, \eqref{eq.lieoddantisym}, \eqref{eq.lieoddjac}. Alternatively,
the formula
\begin{equation}\label{eq.brackoddsym}
    [\Pi a,\Pi b]:=(-1)^{\at}\Pi [a,b]
\end{equation}
defines  on $\Pi L$ an odd bracket satisfying~\eqref{eq.antiliebilin}, \eqref{eq.antiliesym} and \eqref{eq.antiliejac}.

The adjoint representation is defined for Lie superalgebras in the usual way: $(\ad a)(b)=[a,b]$. The operator $\ad a$ has the same parity as $a$. In particular, for an odd $a$, if $[a,a]=0$, then $(\ad a)^2=\frac{1}{2}[\ad a, \ad a]=\frac{1}{2}\ad[a, a]=0$\,.

For odd versions of Lie superalgebras, the linear operator $b\mapsto [a,b]$ has the parity opposite to that of $a$. The adjoint representation is defined by $(\ad a)(b):=(-1)^{\at}[a,b]$ for the antisymmetric version (where the sign is needed for the linearity of $\ad$) and by $(\ad a)(b):=[a,b]$  for the symmetric version. If an even element $a$ satisfies $[a,a]=0$ for an odd bracket, then $(\ad a)^2=0$\,.

We call an odd element satisfying $[a,a]=0$ w.r.t. an even bracket or an even element satisfying $[a,a]=0$ w.r.t. an odd bracket, \emph{homological}. The identity $(\ad a)^2=0$ for a homological element is used many times in this paper.

Now let us recall Poisson and Schouten (Gerstenhaber) algebras. Both types of algebras have two multiplicative operations: a commutative associative product and a bracket. They differ by the relative parity of these operations (which cannot be changed by a parity shift). If we agree that the associative product is even, then the bracket can be even or can be odd, and there are two conventions for the latter.

A \emph{Poisson algebra} or   \emph{even Poisson algebra} $A$ is a commutative associative algebra,\footnote{The requirement of commutativity can be dropped.}   $A_iA_j\subset A_{i+j}$, endowed with an even   bracket, $\{A_i,A_j\}\subset A_{i+j}$, such that it forms  a Lie superalgebra w.r.t.  this bracket  and the Leibniz identity
\begin{equation}\label{eq.leibn}
    \{a,bc\}=\{a,b\}c+(-1)^{\at\bt}b\{a,c\}
\end{equation}
is satisfied.

A parallel definition of Schouten algebras goes in two versions.

\emph{Version 1. }
A \emph{Schouten algebra}  or  \emph{odd Poisson algebra} or  \emph{Gerstenhaber algebra} $A$ is a commutative associative algebra,  $A_iA_j\subset A_{i+j}$,  with an odd  bracket,  $\{A_i,A_j\}\subset A_{i+j+1}$, such that w.r.t.  the bracket it is  an odd Lie superalgebra  (meaning that ~\eqref{eq.lieoddbilin}, \eqref{eq.lieoddantisym}, \eqref{eq.lieoddjac} hold for the bracket) and the Leibniz identity (see below) is satisfied.

\emph{Version 2. }
A  \emph{Schouten algebra} or  \emph{odd Poisson algebra} or  \emph{Gerstenhaber algebra}   (\emph{in  symmetric convention}) is defined in the same way except for requiring that it forms a Lie antialgebra w.r.t. the odd bracket (that is, ~\eqref{eq.antiliebilin}, \eqref{eq.antiliesym} and \eqref{eq.antiliejac} hold for the bracket).

For both versions of the definition of a Schouten algebra, the Leibniz identity has the same form
\begin{equation}\label{eq.leibnodd}
    \{a,bc\}=\{a,b\}c+(-1)^{(\at+1)\bt}b\{a,c\}\,.
\end{equation}
(The Leibniz identities~\eqref{eq.leibn} and ~\eqref{eq.leibnodd} for even and odd brackets alike simply mean that taking a bracket is a derivation of the associative product.)

\subsection{Canonical brackets and the de Rham differential. Derived brackets}
Let $M$ be a supermanifold.
For further reference we give the explicit formulas for the canonical Poisson and Schouten brackets. On $T^*M$ we have the canonical even symplectic form   $\o=dp_adx^a=d(p_adx^a)=d(dx^ap_a)$. The corresponding  Poisson bracket is
\begin{equation*}
    (F,G)=(-1)^{\!\Ft\at}\left((-1)^{\at}\der{F}{p_a}\der{G}{x^a}- \der{F}{x^a}\der{G}{p_a}\,\right)\,,
\end{equation*}
for arbitrary $F,G\in \fun(T^*M)$\,. Together with the usual   multiplication  it  makes   $\fun(T^*M)$  a Poisson algebra so that  the `initial conditions'
\begin{align*}
    (f,g)&=0\,, \\
    (X\cdot p, f)&=\p_Xf\,,\\
    (X\cdot p, Y\cdot p)&=[X,Y]\cdot p\,,
\end{align*}
are satisfied, and this defines the bracket uniquely.
Here $f,g\in\fun(M)$, $X,Y\in\Vect(M)$, and $X\cdot p=X^ap_a$  denotes the fiberwise-linear Hamiltonian corresponding to a vector field $X=X^a\p_a$.

\begin{example} For an odd $F\in \fun(T^*M)$,
\begin{equation*}
    (F,F)=2\,\der{F}{p_a}\der{F}{x^a}\,.
\end{equation*}
We shall use this formula later.
\end{example}

Consider now the vector bundles $\Pi TM$ and $\Pi T^*M$.
They are supermanifolds even if $M$ is an ordinary manifold. $TM$, $T^*M$, $\Pi TM$ and $\Pi T^*M$ are the four  `neighbor'  bundles that can be obtained from each other by dualizations and parity shifts.
\begin{center}
{ \unitlength=3pt
\begin{picture}(0,35)
\put(0,30)
    {\begin{picture}(0,0)
    \put(-2,0){$TM$}
    \put(-15,-20){$\Pi TM$}
    \put(9,-20){$\Pi T^*M$}
    \put(-2,-30){$T^*M$}
    {\thicklines
    \put(1,-2){\line(0,-1){24}} }
    \put(-5,-19){\line(1,0){5}}
    \put(2,-19){\line(1,0){6}}
    \put(0,-2){\line(-1,-2){7.2}}
    \put(2,-2){\line(1,-2){7.2}}
    \put(-7.5,-22){\line(1,-1){5}}
    \put(10,-22){\line(-1,-1){5}}
    \end{picture}}
\end{picture}
}
\end{center}
Each bundle here
possesses a canonical structure, all these structures being essentially equivalent to each other and   just manifestations of one canonical `first order structure' for every supermanifold $M$, as is made explicit below. The difference between   $TM$ and the other natural bundles   $T^*M$, $\Pi TM$ and $\Pi T^*M$ is that the structure we are speaking about is defined on sections of $TM\to M$ (vector fields), while in the other cases the structure is defined on functions on the total space.

Functions on $\Pi TM$ can be identified with differential forms: $\fun(\Pi TM) = \O(M)$\,, and functions on $\Pi T^*M$ can be identified with multivector fields: $\fun(\Pi T^*M) = \Mult(M)$\,. (There is a certain abuse of language here, but we shall stick to it.) Local coordinates $x^a$ on $M$ induce coordinates on $\Pi TM$ and $\Pi T^*M$, which $x^a,dx^a$ and $x^a,x^*_a$, respectively. The parity of $dx^a$ and $x^*_a$ is opposite to that of $x^a$. The transformation laws are
 \begin{equation*}
    dx^a=dx^{a'}\der{x^a}{x^{a'}}
 \end{equation*}
and
\begin{equation*}
    x^*_a=\der{x^{a'}}{x^{a}}\, x^*_{a'}\,.
\end{equation*}
(So the `antimomenta'   $x^*_a$ transform exactly as the momenta $p_a$, but has the opposite parities. The same is true for $dx^a$ and the velocities $\dot x^a$.)

The canonical structure on $\Pi TM$ is of course the de Rham differential, which we regard as a vector field: $d\in \Vect(\Pi TM)$\,. In coordinates,
\begin{equation}\label{eq.d}
    d=dx^a \der{}{x^a}\,.
\end{equation}
The field $d$ is odd and since $d^2=0$ it is an example of a homological vector field. Relation between $d$ and the commutator of vector fields on $M$ itself is given by the Cartan formula
\begin{equation}\label{eq.commder}
    i_{[X,Y]}= (-1)^{\Xt}[[d,i_X],i_Y]\,.
\end{equation}
Here $i_X\in \Vect(\Pi TM)$ is the interior product with $X\in \Vect (M)$. Explicitly,
\begin{equation}\label{eq.inter}
    i_X=(-1)^{\Xt}X^a(x)\der{}{dx^a}\,,
\end{equation}
if $X=X^a\p_a$\,. (The signs in~\eqref{eq.commder} and~\eqref{eq.inter} are required for linearity.) Note that $[i_X,i_Y]=0$. This is a prototypal relation for derived brackets, see below.

\subsection{Homological vector fields. $L_{\infty}$-algebras}

\subsubsection{Homological vector fields and $Q$-manifolds}

A \emph{homological vector field} is a homological element of the Lie superalgebra of vector fields. In other words, an odd vector field $Q$ such that $Q^2=0$. A (super)manifold endowed with a homological vector field is called a \emph{$Q$-manifold}. A morphism of $Q$-manifolds or a $Q$-map ($Q$-morphism) is a smooth map that intertwines homological vector fields. (Remark. $Q$-manifolds, in particular in the presence of a $\ZZ$-grading and assuming additionally that $\deg Q=+1$, are sometimes referred to as ``DG-manifolds''.) The theory of $Q$-manifolds was initiated by A.~Schwarz (to whom belongs the name), A. Vaintrob and M.~Kontsevich.

\subsubsection{$L_{\infty}$-algebras (symmetric version)}
Consider  an odd vector field $Q\in \Vect (\R{m|n})$. Let its Taylor expansion at the origin have the form
\begin{equation*}
  Q=\left(Q_0^k+\xi^iQ^k_i+\frac{1}{2}\xi^j\xi^iQ_{ij}^k+
  \frac{1}{3!}\xi^l\xi^j\xi^iQ_{ijl}^k+\ldots\right)\der{}{\xi^k}.
\end{equation*}
The coefficients $Q_0^k$, $Q^k_i$, $Q_{ij}^k$, $Q_{ijl}^k$, \dots
define a sequence of $N$-ary operations ($N=0,1,2,3,\dots$) on the
vector space $\mathbb R^{n|m}=\Pi \R{m|n}$, and the condition $Q^2=0$
expands to a linked sequence of ``generalized Jacobi identities''. If only the
quadratic term   is
present, we return to the case of a Lie (super)algebra. The general case
is a  \emph{strongly homotopy Lie algebra}
(or \emph{$L_{\infty}$-algebra}). Usually $Q_0=0$ (otherwise the algebra is called ``curved'' or with ``background'').

{Coordinate-free description}
Given a superspace $V$. (For Lie algebras, $V=\mathfrak g$.) Each $v\in V$ defines a (constant) vector field $i_{v}\in V$. Define ``higher derived brackets'' as follows (here $N=0,1,2, \ldots, $):
\begin{equation*}
    i_{\{v_1,\ldots,v_N\}_Q}:=[[[\ldots[Q,v_1],v_2],\ldots,v_N](0)\,.
\end{equation*}

These operations    odd  and symmetric (in the super sense).
\begin{theorem}They satisfy the identities
\begin{equation*}
    \sum_{\parbox{1.2cm}{\small $\scriptstyle k+l=N$}} \sum_{\text{$(k,l)$-shuffles}}
    (-1)^{\a} \{\{v_{\s(1)},\ldots,v_{\s(k)}\},v_{\s({k+1})},\ldots,v_{\s({k+l})}\}=0
\end{equation*}
for all $N=0,1,2,\ldots\ $ if and only if $Q^2=0$. (Here $(-1)^{\a}$ is the sign
prescribed by the sign rule for a permutation of homogeneous
elements $v_1,\ldots, v_N\in V$.)
\end{theorem}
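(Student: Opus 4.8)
The plan is to isolate the three structural ingredients that make the derived brackets work, and then reduce the whole statement to a single computation involving $[Q,Q]$. First I would record the ambient data inside $L:=\Vect(\R{m|n})$: the constant fields $\mathfrak a:=\{i_v:v\in V\}$ form an \emph{abelian} subalgebra, by the relation $[i_v,i_w]=0$ already noted for interior products; the fields vanishing at the origin, $\mathfrak b:=\{X:X(0)=0\}$, form a subalgebra, since in $[X,Y]^k=X^a\p_aY^k-(-1)^{\Xt\Yt}Y^a\p_aX^k$ every term at $0$ carries a factor $X^a(0)$ or $Y^a(0)$; and the ``value at the origin'', $P\co X\mapsto X(0)$ (read back as a constant field), is the projector onto $\mathfrak a$ with $\ker P=\mathfrak b$. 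By definition $i_{\{v_1,\ldots,v_N\}_Q}=P\,\ad(i_{v_N})\cdots\ad(i_{v_1})(Q)$, and because $[Q,i_v]=\pm\,\p_v Q$ differentiates the coefficients of $Q$, the operator $P\,\ad(i_{v_N})\cdots\ad(i_{v_1})$ reads off, up to sign, the $N$-th Taylor coefficient of the field it is applied to at the origin, evaluated on $v_1,\ldots,v_N$.

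The engine of the proof is the identity
\[
   P[A,B]=P[PA,B]+P[A,PB]\qquad(A,B\in L).
\]
To obtain it, write $A=PA+(1-P)A$ and likewise for $B$, and expand into four brackets: $[PA,PB]=0$ because $\mathfrak a$ is abelian; $P[(1-P)A,(1-P)B]=0$ because $\mathfrak b=\ker P$ is bracket-closed; while $P[PA,(1-P)B]=P[PA,B]$ and $P[(1-P)A,PB]=P[A,PB]$, using $P[PA,PB]=0$ once more.

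Next I would compute $P\,\ad(i_{v_N})\cdots\ad(i_{v_1})\bigl(\tfrac12[Q,Q]\bigr)$. Since each $\ad(i_{v_j})$ is a derivation of the bracket (Jacobi identity) and the $i_{v_j}$ pairwise commute, distributing the $N$ operators over the two factors of $[Q,Q]$ and collecting by which subset lands on each factor yields a sum over $(k,l)$-shuffles of terms $\pm[A_I,A_J]$, with $A_I=\ad(i_{v_{\sigma(k)}})\cdots\ad(i_{v_{\sigma(1)}})(Q)$ and $A_J$ the complementary iterated bracket. Applying the displayed identity splits each $P[A_I,A_J]$ as $P[PA_I,A_J]+P[A_I,PA_J]$; since $PA_I=i_{\{v_{\sigma(1)},\ldots,v_{\sigma(k)}\}_Q}$ lies in the abelian $\mathfrak a$, its $\ad$ commutes with every $\ad(i_{v_j})$, so it can be slid through the remaining operators to give $\pm\,P\,\ad(i_{v_{\sigma(N)}})\cdots\ad(i_{v_{\sigma(k+1)}})[Q,i_{\{v_I\}}]=\pm\{\{v_{\sigma(1)},\ldots,v_{\sigma(k)}\}_Q,v_{\sigma(k+1)},\ldots,v_{\sigma(N)}\}_Q$, and symmetrically for the other summand. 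Reconciling the combinatorics (the factor $\tfrac12$ against the two summands of the identity and the two orderings of each unordered partition) identifies the total with the full left-hand side of the asserted relation; thus $P\,\ad(i_{v_N})\cdots\ad(i_{v_1})\bigl(\tfrac12[Q,Q]\bigr)$ equals, up to an overall sign, the $N$-th generalized Jacobi expression.

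The theorem then follows in both directions. If $Q^2=0$ then $[Q,Q]=2Q^2=0$, so every such expression vanishes and all relations hold. Conversely, if all relations hold, then $P\,\ad(i_{v_N})\cdots\ad(i_{v_1})[Q,Q]=0$ for all $N$ and all $v_1,\ldots,v_N$; as noted, this is the complete array of Taylor coefficients of $[Q,Q]$ at the origin, which therefore vanishes, i.e.\ $Q^2=0$. I expect the genuine difficulty to be the sign bookkeeping: matching the Koszul signs $(-1)^{\a}$ of the shuffle sum against the signs produced by the Leibniz expansion, by commuting the constant fields $i_{v_j}$ past one another and past $i_{\{v_I\}}$, and by the parity conventions of the symmetric bracket fixed in the Preliminaries. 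The combinatorial skeleton above is otherwise forced.
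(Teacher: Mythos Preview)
The paper does not contain a proof of this theorem; it merely states the result and directs the reader to~\cite{tv:higherder} with the phrase ``See more in~\cite{tv:higherder}.'' Your proposal is in fact a faithful reconstruction of the argument from that reference: the decomposition $L=\mathfrak a\oplus\mathfrak b$ with $\mathfrak a$ abelian and $\mathfrak b$ a subalgebra, the projector identity $P[A,B]=P[PA,B]+P[A,PB]$, and the evaluation of $P\,\ad(i_{v_N})\cdots\ad(i_{v_1})\bigl(\tfrac12[Q,Q]\bigr)$ via the Leibniz rule are exactly the structural pillars of Voronov's higher-derived-bracket machinery. The skeleton you give is correct, and your honest flag that the sign bookkeeping is the only genuinely delicate point is accurate; the combinatorics is indeed forced once the projector identity is in hand. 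So: correct, and essentially the same approach as the one the paper defers to.
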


See more in~\cite{tv:higherder}.


A \emph{morphism} of $L_{\infty}$-algebras $V_1\to V_2$ (or an \emph{$L_{\infty}$-morphism}) is  a morphism of the corresponding $Q$-manifolds (i.e., a smooth map that relates $Q_1$ on $V_1$ and $Q_2$ on $V_2$).

In coordinates: if $Q_1=Q^k(\x)\der{}{\x^k}$ and $Q_2=Q^{\mu}(\h)\der{}{\h^{\mu}}$, one has to expand
\begin{equation*}
    Q_1^i(\x)\der{\h^{\mu}}{\x^i}=Q_2^{\mu}(\h(\x))
\end{equation*}
into a Taylor series at the origin. (Here $F\co (\x^i)\mapsto (\h^{\mu}(\x))$.)

{Example: `adjoint representation'.}
Chuang and Lazarev introduced an `adjoint representation' for $L_{\infty}$-algebras. In our language it looks as follows.

Consider an $L_{\infty}$-algebra $L$ in the symmetric version, where $Q\in \Vect(L)$ is the corresponding homological vector field. \\
The  \emph{adjoint representation} of $L$  is  a (non-linear) mapping $L\to \Pi\Vect(L)$ given by the formula:
\begin{equation*}
    \h \mapsto Q^{\h}- Q- Q(\h)\,,
\end{equation*}
where $Q^{\h}$ denotes a parallel shift, $Q^{\h}=Q^k(\x+\h)\der{}{\x^k}$ and $Q(\h)=Q^k(\h)\der{}{\x^k}$ is a constant vector. One can check that it is   an $L_{\infty}$-morphism.

\subsection{Poisson structures and homotopy Poisson structures}



For general construction of ``higher derived brackets'' see~\cite{tv:higherder, tv:higherderarb}.

A Poisson structure in the algebra $\fun(M)$ is specified by an even fiberwise quadratic function $P\in \fun(\Pi T^*M)$,
$P=\frac{1}{2}P^{ab}x^*_bx^*_a$, by the formula:
\begin{equation*}
    \{f,g\}_P=\lsch \lsch P,f\rsch,g\rsch\,.
\end{equation*}
In coordinates,
\begin{equation*}
    \{f,g\}_P=\pm P^{ab}\p_af\p_bg\,.
\end{equation*}
The condition $\lsch P, P \rsch =0$ is required and it gives the Jacobi identity for the bracket $ \{f,g\}_P$.
Similarly, an odd Poisson structure in the algebra $\fun(M)$ is specified by an odd fiberwise quadratic function $S\in \fun(T^*M)$ (a  `master Hamiltonian'), by the same kind of formulas. It should satisfy $(H,H)=0$. 

\begin{example} The master Hamiltonian $D=  \pi^a p_a\in \fun(T^*(\Pi T^*M))$ for the canonical Schouten bracket on $\fun(\Pi T^*M)$.
\end{example}

Similarly, a \emph{homotopy Poisson structure} or \emph{$\Pinf$-structure} is defined by an arbitrary even function $P\in \fun(\Pi T^*M)$ satisfying $\lsch P, P \rsch =0$ by the formulas for $k=0,1,2,\ldots$
\begin{equation*}
    \{f_1,\ldots,f_k\}_P:= \lsch \ldots \lsch\lsch P,f_1\rsch, f_2\rsch, \ldots , f_k\rsch_{|M}\,,
\end{equation*}
for $k=1,2,\ldots $\,, where $f_i\in\fun(M)$\,. The brackets of odd orders are odd, the brackets of  even orders are
even.
(Only the jet of $P$ at $M$ is needed.) The brackets are 
antisymmetric in the super sense and satisfying higher Jacobi identities. (They make a structure of an $\Linf$-algebra in the antisymmetric version.) They are also multiderivations with respect to associative multiplication.

Explicitly, if
\begin{equation*}
P=P(x,x^*)= P^a(x)\, x^*_a +\frac{1}{2}\, P^{ab}(x)\,
x^*_bx^*_a+ \frac{1}{3!}\,P^{abc}(x)\, x^*_cx^*_bx^*_a+\ldots\,,
\end{equation*}
then
\begin{equation*}
    \{f_1,\ldots,f_n\}_P =\pm
    P^{a_1\ldots a_k}(x)\,\p_{a_k}\!f_k\,\ldots\,
    \p_{a_1}\!f_1\,.
\end{equation*}

Similarly we have the notion of homotopy Schouten structure. 
A \emph{homotopy Schouten} or \emph{$\Sinf$ structure} is defined by an odd function (or formal Taylor series at $M$) $H\in\fun(T^*M)$ satisfying $(H,H)=0$, by
\begin{equation*}
    \{f_1,\ldots,f_k\}_H=  ( \ldots (H,f_1),\ldots,f_k)_{|M}\,.
\end{equation*}
All the brackets are odd, are derivations in each arguments and satisfy higher Jacobi identities (of an $\Linf$-algebra in symmetric form). Explicit formulas are written similarly.

Of course there are purely algebraic versions. 

A \emph{homotopy Poisson algebra} or a $\Pinf$-algebra is a commutative associative algebra which is also an $L_{\infty}$-algebra {\small (in the antisymmetric version)} such that all  brackets are multiderivations of the associative product.

A \emph{homotopy Schouten algebra} or $\Sinf$-algebra is a commutative associative algebra which is also an $L_{\infty}$-antialgebra {\small (or an `$L_{\infty}$-algebra in the symmetric version')} such that all  brackets are multiderivations of the associative product.

\subsection{The cotangent double vector bundle  and its odd analog} \label{subsec.cot}

We shall use the following commutative diagram and some standard notation associated with it:
\begin{equation*}
    \begin{CD} T^*E @>>> E^* \\
                @VVV    @VVV\\
                E @>>> M
    \end{CD}
\end{equation*}
Here $E$ is an arbitrary vector bundle. The horizontal projection $T^*E\to E^*$ arises from the natural identification $T^*E\cong T^*E^*$ discovered by Mackenzie  and Xu.\footnote{Their theorem has predecessors: namely, Tulczyjew, for $E=TM$, and Dufour, who discovered the general case  about the same time as Mackenzie and Xu, but whose work was unpublished.} Such an identification is unique up to some sign, and a choice of sign may give either a symplectomorphism or antisymplectomorphism.  A particular choice is not important here. What is important, is that this diagram introduces into $T^*E$ a double vector bundle structure and the associated bi-grading. There are the following \emph{weights} on $T^*E$: $w_1$ is the standard `vertical' weight (degree in the momentum variables for $T^*E\to E$) and $w_2$ is the standard `horizontal' weight (which corresponds to degree in the momentum variables for $T^*E^*\to E^*$).

There is an odd analog of the Mackenzie--Xu theorem (see~\cite{tv:graded}). It gives a natural identification $\Pi T^*E\cong \Pi T^*(\Pi E^*)$. (Again, there is a   choice of signs.) From that, we have a commutative diagram
\begin{equation*}
    \begin{CD} \Pi T^*E @>>> \Pi E^* \\
                @VVV    @VVV\\
                E @>>> M
    \end{CD}
\end{equation*}
and a bi-graded structure of $\Pi T^*E$.

\subsection{Lie algebroids and $L_{\infty}$-algebroids}

We shall consider Lie algebroids via their cotangent bundles. (Such an approach is the `cotangent philosophy' championed by Kirill Mackenzie.)


A \emph{Lie algebroid} over $M$ is a vector bundle $E\to M$ with a Lie algebra structure on the space of sections $\fun(M,E)$ and a bundle map $a\co E\to TM$ (called the \emph{anchor}) satisfying
\begin{equation*}
    [u,fv]={a(u)}f\,v+(-1)^{\ut\ft}f[u,v]
\end{equation*}
($u\in \fun(M,E)$ and $f\in \fun(M)$).

Examples: a Lie (super)algebra $\mathfrak g$ (here $M=\{*\}$); the tangent bundle $TM\to M$; an integrable distribution $D\subset TM$; an ``action algebroid'' $M\times \mathfrak g$.

Equivalent manifestations on ``neighbors'':
\begin{itemize}
\item Homological vector field of weight $1$ on $\Pi E$
\item Poisson bracket of weight $-1$ on $E^*$
\item Schouten bracket of weight $-1$ on $\Pi E^*$
\end{itemize}
(structures on total spaces!). {Description via $Q$-manifolds.} 
In local coordinates $x^a$ (on the base) and $\x^i$ (on the fibers), we have on $\Pi E$:
\begin{equation*}
    Q=\x^iQ_i^a(x)\,\der{}{x^a}+\frac{1}{2}\,\x^i\x^j Q_{ji}^k
    (x)\,\der{}{\x^k}\,.
\end{equation*}
The anchor and the Lie bracket for $E$ are expressed by
\begin{equation*}
    a(u)f:=\bigl[[Q,i_u)],f\bigr]
\end{equation*}
and
\begin{equation*}
    i_{[u,v]}):=(-1)^{\ut}\bigl[[Q,i_u],i_v\bigr].
\end{equation*}
Here the map $i\co \fun(M, E)\to \Vect (\Pi E)$  is
$i_u =(-1)^{\ut}u^i(x)\der{}{\x^i}$.

 {Morphisms of Lie algebroids.}
The definition of a \emph{morphism} of Lie algebroids over different bases (due to Higgins and Mackenzie) is tricky. It is a morphism of vector bundles
\begin{equation*}
    \begin{CD} E_1 @>{\Phi}>> E_2 \\
                @VVV    @VVV\\
                M_1 @>{\varphi}>> M_2
    \end{CD}
\end{equation*}
satisfying non-obvious conditions.

\begin{proposition}[Vaintrob] This vector bundle map
is a morphism of Lie algebroids if and only if
the  induced map $\Phi^{\Pi}\co \Pi E_1\to \Pi E_2$ of the opposite vector bundles
is a morphism of $Q$-manifolds.
\end{proposition}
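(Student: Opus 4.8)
The plan is to reduce the statement to a coordinate computation on the parity-shifted total spaces, exploiting that a $Q$-morphism is precisely a pair of $\Phi^{\Pi}$-related homological vector fields, and that relatedness need only be tested on generators of $\fun(\Pi E_2)$. First I would fix adapted coordinates $x^a,\x^i$ on $\Pi E_1$ and $y^{\mu},\h^{\a}$ on $\Pi E_2$, the base coordinates having fiber-weight $0$ and the parity-reversed fiber coordinates having weight $1$. A vector bundle map $\Phi\co E_1\to E_2$ over $\f\co M_1\to M_2$ induces $\Phi^{\Pi}\co \Pi E_1\to \Pi E_2$ of the form $y^{\mu}=\f^{\mu}(x)$, $\h^{\a}=\x^i\Phi_i^{\a}(x)$, while each $Q_r$ has the weight-$1$ shape $Q_r=\x^iQ_i^a(x)\,\partial_{x^a}+\tfrac12\,\x^i\x^jQ_{ji}^k(x)\,\partial_{\x^k}$ displayed above. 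The $Q$-morphism condition is that $Q_1$ and $Q_2$ are $\Phi^{\Pi}$-related, i.e. $Q_1\circ(\Phi^{\Pi})^{*}=(\Phi^{\Pi})^{*}\circ Q_2$ on functions; since $(\Phi^{\Pi})^{*}$ is an algebra homomorphism and the $y^{\mu},\h^{\a}$ generate $\fun(\Pi E_2)$, it is enough to verify this on these generators. As $\Phi^{\Pi}$ preserves the fiber weight and $Q$ raises it by one, the weight-$0$ and weight-$1$ generators decouple, and this split is exactly the split between anchor data and bracket data.

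\emph{The anchor.} Applying the relatedness identity to the weight-$0$ generators $y^{\mu}$ and comparing coefficients of $\x^i$ gives $Q_i^a(x)\,\partial_{x^a}\f^{\mu}=\Phi_i^{\a}(x)\,(Q_{\a}^{\mu}\circ\f)$. Through the derived-bracket formula $a(u)f=[[Q,i_u],f]$ this is nothing but the anchor compatibility $T\f\circ a_1=a_2\circ\Phi$.

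\emph{The bracket.} Applying the relatedness identity to the weight-$1$ generators $\h^{\a}$ produces a single identity, quadratic in $\x$, among the structure functions $Q_{ji}^k$ of $E_1$, the bundle-map components $\Phi_i^{\a}$, the anchor $Q_i^a$ of $E_1$, and the structure functions $Q_{\b\a}^{\g}$ of $E_2$. The key point is that the cross-terms arise from $Q_1$ differentiating the $x$-dependent coefficients $\Phi_i^{\a}(x)$ inside $\h^{\a}=\x^i\Phi_i^{\a}$, and these are precisely the anchor-derivative corrections that feature in the Higgins--Mackenzie bracket condition. Contracting this coordinate identity with sections and using $i_{[u,v]}=(-1)^{\ut}[[Q,i_u],i_v]$ together with $i_u=(-1)^{\ut}u^i(x)\,\partial_{\x^i}$, I would recover the Higgins--Mackenzie compatibility in its standard form: for decompositions $\Phi\circ u_1=\sum_i f_i\,(v_i\circ\f)$ and $\Phi\circ u_2=\sum_j g_j\,(w_j\circ\f)$, the element $\Phi\circ[u_1,u_2]_1$ equals $\sum_{i,j}f_ig_j\,([v_i,w_j]_2\circ\f)$ plus the anchor-derivative terms built from $a_1(u_1)g_j$ and $a_1(u_2)f_i$.

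\emph{Main obstacle.} The real work is the dictionary between the invariant Higgins--Mackenzie condition, which is necessarily phrased through such decompositions because sections of $E_1$ do not push forward when $\f$ is not a diffeomorphism, and the single coordinate identity obtained from relatedness on the $\h^{\a}$. I would have to check that the coordinate identity is independent of the chosen decomposition and that the anchor-derivative terms match exactly; this is where the parity-shift signs --- the factors $(-1)^{\ut}$ in $i_u$ and in the derived bracket --- must be tracked with care. Once this bookkeeping is in place, the equivalence is automatically two-sided, since both ``Lie algebroid morphism'' and ``$Q$-morphism'' have been rewritten as the same pair of identities (one of weight $0$, one of weight $1$), yielding the ``if and only if'' in one stroke.
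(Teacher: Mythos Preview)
The paper does not give its own proof of this proposition: it is stated and attributed to Vaintrob, and the text immediately moves on to Lie bialgebroids. So there is nothing to compare against on the paper's side.

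Your approach is the standard and correct one. Writing the $Q$-morphism condition as $\Phi^{\Pi}$-relatedness of $Q_1$ and $Q_2$, testing it on the generating coordinates $y^{\mu}$ and $\h^{\a}$, and using the weight split to separate anchor and bracket data is exactly how this result is proved. Your identification of the ``main obstacle'' is also accurate: the only nontrivial bookkeeping is matching the single weight-$1$ coordinate identity with the Higgins--Mackenzie bracket condition, which is phrased via $\fun(M_1)$-decompositions of $\Phi\circ u$ because sections do not push forward over a general base map. One small point worth making explicit in the write-up: independence of the decomposition follows because the difference of two decompositions lies in the kernel of the evaluation map, and the anchor-derivative terms together with the Leibniz rule for the bracket on $E_2$ ensure that the right-hand side of the Higgins--Mackenzie condition is insensitive to that ambiguity; equivalently, on the $Q$-side this is automatic since you are comparing two globally defined functions on $\Pi E_1$.
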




We need to recall Lie bialgebroids.

We use the following language: a \emph{$P$-manifold} is a Poisson manifold; an \emph{$S$-manifold} is a Schouten manifold; a \emph{$QP$-manifold} (a \emph{$QS$-manifold})  possesses both $Q$- and $P$-structure ($S$-structure, resp.) so that the vector field is a derivation of the bracket. See more in~\cite{tv:graded}.

{Lie bialgebroids} were introduced by Mackenzie and Xu; an efficient description later found by Y.~Kosmann-Schwarzbach. Below is a version that uses the language of $Q$-manifolds.

A \emph{Lie bialgebroid} over $M$ is a Lie algebroid $E$ over $M$ such that $E^*$ is also a Lie algebroid over $M$ and so that   $\Pi E$ (with the induced structure) is a $QS$-manifold. Equivalently: $\Pi E^*$ is a $QS$-manifold.

\section{The $L_{\infty}$-bialgebroid associated with a homotopy Poisson structure}

\subsection{Classical construction of the Koszul bracket}\label{subsec.koszul}

The name `Koszul bracket' has several meanings. Here by a Koszul bracket we understand a Lie bracket on $1$-forms defined by a Poisson structure and its extension to a Schouten algebra structure on arbitrary differential forms (and in the super case\ --- \ on pseudodifferential forms).

Let $M$ be a Poisson manifold  with a Poisson tensor $P=(P^{ab}(x))$. This of course gives the Poisson  bracket of functions:
\begin{equation*}
    \{f,g\}_P=P^{ab}\p_af\p_bg\,.
\end{equation*}
In particular, we have the brackets of coordinates
\begin{equation*}
    \{x^a,x^b\}_P=P^{ab}(x)\,.
\end{equation*}

Besides that, a Poisson structure also gives:

\begin{itemize}
  \item The Lichnerowicz differential $d_P$ on multivector fields;
  \item The Koszul bracket $[\_,\_]_P$ on differential forms.
\end{itemize}

The \emph{Lichnerowicz differential} $d_P$ corresponding to a Poisson structure is defined as follows:
\begin{itemize}
  \item  $d_P\co \Mult^k(M)\to \Mult^{k+1}(M)$  where $\Mult^k(M)$ is the space of multivector fields on
$M$ of degree $k$,
\begin{equation*}
    d_P:=\lsch P, \_\rsch\,.
\end{equation*}
Here $P=\frac{1}{2}P^{ab}x^*_bx^*_a$ and $\lsch \_, \_\rsch$ denotes the canonical Schouten bracket of multivector fields.
  \item Explicitly:
  \begin{equation*}
    d_P=P^{ab}x^*_b\der{}{x^a}+\frac{1}{2}\p_aP^{bc}x^*_cx^*_b\der{}{x^*_a}
  \end{equation*}
\end{itemize}

The \emph{Koszul bracket} on the algebra of forms $\O(M)$ induced by a Poisson structure on $M$ is defined by the properties:
\begin{itemize}
  \item it is an odd Poisson   bracket on  $\O(M)$,
  \item the `initial conditions' are satisfied:
  \begin{align*}
    [f,g]_P&=0\,, \\
    [f,dg]_P&=(-1)^{\ft}\{f,g\}_P\,, \\
    \quad [df,dg]_P&=-(-1)^{\ft}d\{f,g\}_P\,,
  \end{align*}
\end{itemize}
(signs for the super case). Here $\{\_,\_\}_P$ is the Poisson bracket of functions, $[\_,\_]_P$ is the Koszul bracket of forms.

In particular, for coordinates and their differentials we
have
\begin{equation}\label{koszul.eq.binary}
    [x^a,x^b]_P=0\,, \quad [x^a,dx^b]_P=-P^{ab}\,, \quad \text{and} \quad
    [dx^a,dx^b]_P= dP^{ab}\,.
\end{equation}

Denote by $\f_P^*\co \O(M)\to \Mult(M)$ the operation of `raising indices' with the help of
$P^{ab}$. It is classically known that:
\begin{itemize}
  \item there is a commutative  diagram
\begin{equation*}\label{intro.eq.diagram}
    \begin{CD} \Mult^k (M)@>{d_P}>> \Mult^{k+1}(M)\\
                @A{\f_P^*}AA         @AA{\f_P^*}A\\
                \O^k(M)@>{d}>> \O^{k+1}(M) \,;
    \end{CD}
\end{equation*}
  \item ${\f_P^*}$  maps the Koszul bracket $[\_,\_]_P$ to the canonical Schouten bracket $\lsch \_, \_\rsch$\,.
\end{itemize}

 {From the viewpoint of Lie (bi)algebroids.}
\begin{itemize}
  \item The Lichnerowicz differential $d_P$ and the Koszul bracket $[\_,\_]_P$   {\small (equivalently)}  specify  a Lie algebroid structure in  $T^*M$.\\
{\small Here $d_P\in \Vect(\fun(\Pi T^*M))$ is the associated homological vector field and $[\_,\_]_P$ is the associated Lie--Schouten bracket. }
  \item Compare with the canonical Lie algebroid structure of $TM$ \\{\small(for which the associated objects are $d$ and $\lsch\_,\_\rsch$). }
  \item It is easy to see that
\begin{itemize}
  \item $d$ is a derivation of the Koszul bracket $[\_,\_]_P$\,;
  \item $d_P$ is a derivation of the canonical Schouten bracket $\lsch\_,\_\rsch$\,.
\end{itemize}
\end{itemize}

That means that  $(TM, T^*M)$ is a Lie bialgebroid, called the  {canonical Lie bialgebroid} of a Poisson manifold (Mackenzie--Xu).

\subsection{Higher Koszul brackets and an $L_{\infty}$-bialgebroid structure}
\label{subsec.highkoszul}

Consider $M$ with a higher Poisson structure $P$. We wish to construct odd brackets on on $\fun(\Pi TM)$ that will be an analog of the Koszul bracket for an ordinary Poisson structure.
\begin{itemize}
  \item Given: $P\in \fun(\Pi T^*M)$,  $\tilde P=0$, and $\lsch P,P\rsch=0$.
  \item Required: $K=K_P\in \fun(T^*(\Pi T^*M))$, $\tilde K=1$, and $(K,K)=0$.
\end{itemize}
(It is also required that when $P$ is fiberwise quadratic, $K_P$ generates the classical Koszul bracket.)

\begin{theorem} There is  a natural odd linear map
\begin{equation*}
   \alpha \co \fun(\Pi T^*M)\to \fun(T^*(\Pi TM))
\end{equation*}
that takes the canonical Schouten bracket on $\Pi T^*M$
to the canonical Poisson bracket on $T^*(\Pi TM)$  up to a sign:
\begin{equation*}
   \a\left(\lsch P,Q\rsch\right)= (-1)^{\Pt+1}\bigl(\a(P), \a(Q)\bigr)\,,
\end{equation*}
for arbitrary $P,Q\in \fun(\Pi T^*M)$.
\end{theorem}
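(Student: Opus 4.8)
The plan is to realize $\alpha$ through the generating Hamiltonian of the canonical Schouten bracket together with the even Mackenzie--Xu identification. Recall from the Example that the canonical Schouten bracket on $\fun(\Pi T^*M)$ is the derived bracket of the even canonical Poisson bracket $(\,\cdot\,,\,\cdot\,)$ on $T^*(\Pi T^*M)$ with respect to the odd homological Hamiltonian $D=\pi^ap_a$; that is, $\lsch F,G\rsch$ is the double bracket $((D,\hat F),\hat G)$, where $\hat F$ denotes the pullback of $F$ along $T^*(\Pi T^*M)\to\Pi T^*M$ (so $\hat F,\hat G$ are independent of the momenta $p_a,\pi^a$). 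I would then \emph{define} $\alpha$ as the composite $P\mapsto \alpha_0(P):=(D,\hat P)$ followed by the identification $T^*(\Pi T^*M)\cong T^*(\Pi TM)$ furnished by the even Mackenzie--Xu theorem applied to $E=\Pi TM$, $E^*=\Pi T^*M$ (so $(\Pi TM)^*=\Pi T^*M$). Since $D$ is odd and $\hat{\,}$ is parity-preserving, $\alpha_0$ and hence $\alpha$ shift parity by one, giving $\widetilde{\alpha(P)}=\tilde P+1$; this is exactly the parity matching required by the two sides of the asserted identity.

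The heart of the proof is a formal manipulation with the operator $\ad_D=(D,\,\cdot\,)$. By the Jacobi identity $\ad_D$ is a derivation of $(\,\cdot\,,\,\cdot\,)$, and since $D$ is homological, $(D,D)=0$, we have $\ad_D^2=0$. Computing on $T^*(\Pi T^*M)$ and using $\ad_D^2=0$,
\[
(\alpha_0 F,\alpha_0 G)=\bigl((D,\hat F),(D,\hat G)\bigr)=\pm\bigl(D,(\hat F,(D,\hat G))\bigr).
\]
The two lifts Poisson-commute, $(\hat F,\hat G)=0$ (they are functions on the base of a cotangent bundle), so applying Jacobi to the triple $D,\hat F,\hat G$ gives $(\hat F,(D,\hat G))=\pm\,((D,\hat F),\hat G)$. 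Hence $(\alpha_0 F,\alpha_0 G)=\pm\bigl(D,((D,\hat F),\hat G)\bigr)$.

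It remains to recognise the inner expression. Assigning momentum-degree one to each of $p_a$ and $\pi^a$, the Hamiltonian $D=\pi^ap_a$ has momentum-degree two; since bracketing with a momentum-independent function lowers the momentum-degree by exactly one, the double bracket $((D,\hat F),\hat G)$ has momentum-degree zero, i.e.\ it is the lift of a function on $\Pi T^*M$. A one-line coordinate check, using $(p_a,F)=\pm\der{F}{x^a}$ and $(\pi^a,F)=\pm\der{F}{x^*_a}$, identifies it with $\widehat{\lsch F,G\rsch}$. Consequently $((D,\hat F),\hat G)=\widehat{\lsch F,G\rsch}$ as functions, and therefore $(\alpha_0 F,\alpha_0 G)=\pm\bigl(D,\widehat{\lsch F,G\rsch}\bigr)=\pm\,\alpha_0(\lsch F,G\rsch)$. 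Transporting through the Mackenzie--Xu identification yields the statement.

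The one genuinely delicate point, and the main obstacle, is pinning down the overall sign $(-1)^{\tilde P+1}$. It assembles from three sources: the sign convention in the derived-bracket formula for $\lsch\,\cdot\,,\,\cdot\,\rsch$; the Koszul signs produced when $\ad_D$ is commuted past $\hat F$, which is precisely where the parity $\tilde P$ enters; and the fact that in the relevant normalisation the Mackenzie--Xu identification is an \emph{anti}symplectomorphism, so it carries $(\,\cdot\,,\,\cdot\,)$ to its negative and supplies the extra $-1$. I would fix all conventions once and verify the exact exponent in coordinates, where $\alpha(P)=\pm\,dx^a\der{\bar P}{x^a}\pm p_a\der{\bar P}{\chi_a}$ on $T^*(\Pi TM)$, with $\bar P(x,\chi)$ obtained from $P$ by replacing $x^*_a$ with the momentum $\chi_a$ conjugate to $dx^a$; one then checks directly that for fiberwise-quadratic $P$ this $\alpha(P)$ generates the classical Koszul bracket, as demanded.
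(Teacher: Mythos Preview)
Your proposal is correct and is essentially the same construction as the paper's: define $\alpha(P)=(D,\hat P)$ on $T^*(\Pi T^*M)$ and transport via Mackenzie--Xu to $T^*(\Pi TM)$. The only difference is in packaging. The paper factors $\alpha$ as the composition
\[
\fun(\Pi T^*M)\ \longrightarrow\ \Vect(\Pi T^*M)\ \longrightarrow\ \fun(T^*(\Pi T^*M))\ \longrightarrow\ \fun(T^*(\Pi TM)),
\]
where the first arrow is $P\mapsto \lsch P,\,\cdot\,\rsch$ (the Schouten-Hamiltonian vector field), the second is the principal symbol $X\mapsto X\cdot p$, and the third is Mackenzie--Xu; each of these preserves brackets up to a known sign, so the identity $\alpha(\lsch P,Q\rsch)=\pm(\alpha(P),\alpha(Q))$ is automatic without any computation, and the formula $\alpha(P)=(D,P)$ then drops out from $\lsch P,\,\cdot\,\rsch=((D,P),\,\cdot\,)$. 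Your route instead takes $\alpha(P)=(D,\hat P)$ as the definition and verifies bracket-preservation by the derived-bracket manipulation with $\ad_D^2=0$ and $(\hat F,\hat G)=0$; this is a valid and standard argument, but it is exactly the computation that the paper's factorisation through $\Vect(\Pi T^*M)$ renders unnecessary.
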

Therefore, for our purposes we can set $K_P:=\a(P)$. Since $\lsch P,P\rsch=0$, we have $(K_P,K_P)=0$ as required.

A proof of the theorem is based on the diagram:
\begin{equation*}
    \begin{CD} T^*(\Pi TM)=T^*(\Pi T^*M)@>>>   \Pi T^*M\\
                @VVV  @VVV \\
               \Pi TM@>>>M
    \end{CD}
\end{equation*}
The map $\a$ is the composition of the following maps preserving the brackets:
\begin{equation*}
    \fun(\Pi T^*M)\to \Vect(\Pi T^*M) \to \fun(T^*(\Pi T^*M))\to \fun(T^*(\Pi TM))\,.
\end{equation*}
From the representation $\lsch P,\_\rsch =((D,P),\_)$ we obtain also that
$\a(P)=(D,P)$. Here $D=dx^ap_a=\pm \pi^ap_a$ is the canonical odd Hamiltonian generating the Schouten bracket.

 {Explicit formulas for $K_P$ go as follows.} 
If $P=P(x,x^*)$,
then for $K_P\in \fun(T^*(\Pi TM))$\,,
\begin{equation*}
K_P=(-1)^{\at}\der{P}{x^*_a}(x,\pi_{.})p_a+dx^a\,\der{P}{x^a}(x,\pi_{.})\,,
\end{equation*}
where $\pi_{.}=(\pi_a)$ and we denote by $p_a,\pi_a$ the momenta conjugate to the coordinates $x^a,dx^a$ on $\Pi TM$, respectively. Note the linear dependence on the coordinates $dx^a$.

 {Explicit formulas for higher Koszul brackets will be as follows.}
The odd Hamiltonian $K=K_P$ defines a sequence of odd
$n$-ary brackets on $\O(M)=\fun(\Pi TM)$,
\begin{equation}\label{koszul.eq.higher}
    [\o_1,\ldots,\o_n]_P=(\ldots(K,\o_1),\ldots,\o_n)|_{\Pi TM}\,, \quad  n=0,1,2,\ldots \ ,
\end{equation}
which makes it a particular case of a
homotopy Schouten algebra.  It is instructive to have a look at the $(k+\ell)$-bracket of $k$ functions $f_1,\ldots,f_k$ and $\ell$ differentials $df_{k+1},\ldots, df_{k+\ell}$:
\begin{align*}
    [f_1,\ldots,f_k, df_{k+1},\ldots,df_{k+\ell}]&=0 \ \text{for $k\geq 2$}\,,  \\
    [f_1,df_2,\ldots,df_\ell]_P&=(-1)^{\e}\,\{f_1,f_2,\ldots,f_\ell\}_P\,,   \\
    [df_1,\ldots,df_\ell]_P&=(-1)^{\e+1}\,d\{f_1,\ldots,f_\ell\}_P\,,
\end{align*}
where $\e=(\ell-1)\ft_1+(\ell-2)\ft_2+\ldots+\ft_{\ell-1}+\ell$.

\begin{theorem}
\begin{itemize}
  \item The higher Koszul brackets define on $T^*M$ a structure of an $L_{\infty}$-algebroid;
  \item Together with the canonical algebroid structure of $TM$, this defines an $L_{\infty}$-bialgebroid $(TM,T^*M)$ canonically associated with a higher Poisson structure on $M$.
\end{itemize}
\end{theorem}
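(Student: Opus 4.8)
The plan is to carry out everything through the master-Hamiltonian description, so that both bullet points reduce to simple statements about the single odd function $K_P$. By the preceding theorem we already have $K_P=\a(P)=(D,P)\in\fun(T^*(\Pi TM))$, odd, with $(K_P,K_P)=0$, and the higher Koszul brackets \eqref{koszul.eq.higher} are the associated derived brackets. I would split the argument exactly along the two bullets: first that $K_P$ encodes an $\Linf$-algebroid on $T^*M$, and second that this structure is compatible, in the bialgebroid sense, with the canonical Lie algebroid of $TM$.

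For the first bullet, I would invoke the derived-bracket theorem of the Preliminaries (symmetric version): since $K_P$ is odd and homological, the brackets $[\o_1,\dots,\o_n]_P=(\dots(K_P,\o_1),\dots,\o_n)|_{\Pi TM}$ satisfy the higher Jacobi identities of an $\Linf$-algebra in symmetric form, and since the canonical bracket $(\,\cdot\,,\,\cdot\,)$ is a biderivation of the product on $\fun(T^*(\Pi TM))$, each bracket is a multiderivation; hence $\O(M)=\fun(\Pi TM)$ is a homotopy Schouten ($\Sinf$) algebra. To upgrade this abstract $\Sinf$-structure to an $\Linf$-\emph{algebroid} structure on $E=T^*M$ (via the dictionary ``Schouten bracket on $\Pi E^*$'', here $\Pi E^*=\Pi TM$), I would read off from the explicit formula for $K_P$ that it is fiberwise \emph{linear} in the $dx^a$. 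This weight-one property with respect to the grading of $\Pi TM\to M$ is exactly what produces a well-defined anchor together with the section-wise Leibniz rule. Specializing to fiberwise-quadratic $P$ recovers \eqref{koszul.eq.binary} and the Mackenzie--Xu cotangent Lie algebroid, which also pins down all signs.

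For the second bullet, I would emphasize that both structures live on the \emph{same} supermanifold $T^*(\Pi TM)\cong T^*(\Pi T^*M)$: the canonical Lie algebroid of $TM$ is carried by the de Rham Hamiltonian $D=dx^ap_a$, while $T^*M$ is carried by $K_P$. The $\Linf$-bialgebroid compatibility is then the single equation $(D,K_P)=0$, which is immediate from $K_P=(D,P)$ and $(D,D)=0$: by the graded Jacobi identity for the even canonical bracket,
\begin{equation*}
    (D,K_P)=(D,(D,P))=\tfrac12\bigl((D,D),P\bigr)=0\,.
\end{equation*}
Equivalently, this says that $d$ is a derivation of all the higher Koszul brackets while the higher Lichnerowicz differential is a derivation of the canonical Schouten bracket, i.e. that $\Pi TM$ (equivalently $\Pi T^*M$) is a homotopy $QS$-manifold; and it says precisely that $D+K_P$ is again homological, $(D+K_P,D+K_P)=0$, the master Hamiltonian of the associated double.

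The hard part will not be the compatibility, which is essentially automatic once phrased through master Hamiltonians (this is the ``gauge transformation of a master Hamiltonian'' mechanism advertised in the introduction), but the bookkeeping in the first bullet: verifying that the weight of $K_P$ is exactly right so that the $\Sinf$-structure is a genuine $\Linf$-algebroid with honest anchor, and tracking all signs through the Mackenzie--Xu identification $T^*(\Pi TM)\cong T^*(\Pi T^*M)$. Once the weight-one property and the signs are fixed by the explicit formula and the quadratic check, both bullet points follow.
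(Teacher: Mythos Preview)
Your proposal is correct and matches the paper's implicit argument. The paper does not spell out a proof of this theorem; it is stated as a consequence of the surrounding machinery (the map $\a$, the formula $K_P=(D,P)$ with $(K_P,K_P)=0$, and the noted linear dependence on $dx^a$), and is later subsumed by the general quasi-triangular construction in Section~\ref{sec.quasi}. Your outline---derived brackets for the $\Sinf$-structure, the weight condition for the algebroid upgrade, and $(D,K_P)=\tfrac12((D,D),P)=0$ for compatibility---is exactly that route.

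One sharpening: the relevant weight is not merely degree in $dx^a$ for $\Pi TM\to M$, but the double-bundle weight $w_2$ on $T^*(\Pi TM)\cong T^*(\Pi T^*M)$, which counts $dx^a$ and $p_a$ together. The explicit formula shows each term of $K_P$ has total degree one in $(dx^a,p_a)$, so under the Mackenzie--Xu identification $K_P$ is fiberwise linear over $\Pi T^*M$ and hence corresponds to a homological vector field there; this is what gives the $\Linf$-algebroid on $T^*M$. You already flagged this bookkeeping as the delicate point, so just make the weight explicit when you write it up.
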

(This is a generalization of the theorem of Mackenzie--Xu.)

\subsubsection{$L_{\infty}$-algebroids and $L_{\infty}$-bialgebroids}

\begin{definition}
An \emph{$L_{\infty}$-algebroid} is a vector bundle $E\to M$ endowed with antisymmetric brackets on sections, of alternating parities, making the space $\fun(M,E)$ an $L_{\infty}$-algebra and with antisymmetric multilinear maps $E\times_M\ldots \times_ME\to TM$ (`higher anchors'), also of alternating parities, so that
\begin{equation*}
    [u_1,\ldots,u_k,fu_{k+1}]= a(u_1,\ldots,u_k)(f)\,u_{k+1}+(-1)^{\e} f[u_1,\ldots,u_k,u_{k+1}]
\end{equation*}
(where $(-1)^{\e}$ is given by the sign rule).
\end{definition}

Similarly one defines an  \emph{$L_{\infty}$-algebroid (symmetric version)}, where all brackets are odd, or an  \emph{$L_{\infty}$-antialgebroid}. If $E$ is an $L_{\infty}$-algebroid, then $\Pi E$ is $L_{\infty}$-antialgebroid. The structure is specified by a homological vector field $Q\in \Vect(\Pi E)$ such that $Q_{|M}=0$. Alternatively, it is specified by a sequence of Lie--Schouten brackets on $\fun(\Pi E^*)$.

(An $L_{\infty}$-algebroid in $E$ has ``manifestations'' on $\Pi E$, $E^*$ and $\Pi E^*$.)

Now we get to $L_{\infty}$-bialgebroids. One version of the notion (which is not the most general) is the following.  
\begin{definition}
An \emph{$L_{\infty}$-bialgebroid} is a vector bundle $E\to M$ such that both $E$ itself and its dual $E^*$ are $L_{\infty}$-algebroids, and these structures are compatible in the following way: the Hamiltonian functions corresponding to the homological vector fields $Q_{E}$ and $Q_{E^*}$ commute under the canonical Poisson bracket on $T^*(\Pi E)=T^*(\Pi E^*)$.
\end{definition}
{\small
This is equivalent to saying that the double vector bundle
\begin{equation*}
    \begin{CD} T^*(\Pi E)=T^*(\Pi E^*)@>>>   \Pi E^*\\
                @VVV  @VVV \\
               \Pi E@>>>M
    \end{CD}
\end{equation*}
is a `double $L_{\infty}$-algebroid'.
}

Commuting of the odd Hamiltonians $H_E$ and $H_{E^*}$ implies that their sum $H=H_E+H_{E^*}$ is self-commuting and defines  $\Sinf$-structures on $\Pi E$ and $\Pi E^*$ (note that a $Q$-structure is a special case of $\Sinf$). 

A natural generalization would be to consider an arbitrary odd function $H$ on $T^*(\Pi E)\cong T^*(\Pi E^*)$ satisfying $(H,H)=0$ and call that an  \emph{$L_{\infty}$-bialgebroid} structure (either on $E$ or $E^*$). In order to obtain a structure on sections, we   expand the master Hamiltonian $H$ according to the two weights $w_1$ and $w_2$ and consider the higher derived brackets restricted to Hamiltonians corresponding to sections of $E$ and $E^*$ (which can be specified by weights, as functions of weights $(1,0)$ and $(0,1)$).

(Note however that in such as way we lose the difference between a ``bialgebroid'' and its ``Drinfeld double''.)

\section{Quasi-triangular $S_{\infty}$- and $P_{\infty}$-structures. Quasi-triangular $L_{\infty}$-bialgebroids}
\label{sec.quasi}

\subsection{Triangular (and quasi-triangular) Lie bialgebroids of Mackenzie--Xu}

Mac\-kenzie and Xu gave a construction of   Lie bialgebroids generalizing Drinfeld's (quasi)tri\-an\-gular Lie bialgebras.  In our language it goes as follows.

Suppose $E\to M$ is a Lie algebroid. Consider the dual vector bundle $E^*\to M$ and choose a function $r\in \fun(\Pi E^*)$, which should be   even fiberwise-quadratic and  satisfy the master equation
\begin{equation*}
    \{r,r\}_E=0\,,
\end{equation*}
where $\{\_\,,\_\}_E$ is the Lie--Schouten bracket induced on $\fun(\Pi E^*)$. Such a datum makes it possible to endow the bundle $E^*\to M$ with a Lie algebroid structure so that together with the original structure on $E$ the pair $(E,E^*)$ becomes a Lie bialgebroid.

Namely, consider the cotangent bundle $T^*(\Pi E)$. We consistently use the identification $T^*(\Pi E)= T^*(\Pi E^*)$ given by the Mackenzie--Xu theorem and use weights $w_1$ and $w_2$ arising from the double vector bundle structure (see subsection~\ref{subsec.cot}).
Let $H_E\in\fun(T^*(\Pi E))$ denote the odd master Hamiltonian corresponding to the Lie algebroid structure of $E$. It corresponds, on one hand, to the homological vector field on $\Pi E$, and on the other hand, generates the Lie--Schouten bracket on $\Pi E^*$. Therefore it has weights $w_1(H_E)=1$ and $w_2(H_E)=2$. Define the Hamiltonian $H_{E^*}:= (H_E,r)$. It is odd, and since $w_1(r)=2$ and $w_2(r)=0$, we obtain  $w_1(H_{E^*})= 1+2-1=2$ and $w_2(H_{E^*})= 2+0-1=1$. Therefore  $H_{E^*}$ corresponds to an odd vector field on $\Pi E^*$ of weight $1$.
We have $(H_{E^*},H_{E^*})=((H_E,r),(H_E,r))=(((H_E,r),H_E),r)- (H_E,((H_E,r),r)))=  - (H_E,((H_E,r),r)))$. The latter is zero because by the assumption  $\{r,r\}_E=((H_E,r),r)=0$. Hence we have a  homological vector field and it makes $E^*\to M$ a Lie algebroid. Since by the construction $(H_E,H_{E^*})=(H_E,(H_E,r))=0$, the compatibility condition for the Lie algebroid structures on $E$ and $E^*$ is satisfied automatically, so the pair $(E,E^*)$ is a Lie bialgebroid.

A Lie bialgebroid $(E,E^*)$ constructed this way  is  called  \emph{triangular}.

\begin{example}[triangular Lie bialgebras]
Let $\mathfrak{g}$ be a Lie (super)algebra and $r\in \L^2(\mathfrak{g})$. Upon the identification $\L^2(\mathfrak{g})\cong S^2(\Pi \mathfrak{g})$, an element $r$ can be regarded as a quadratic function on $\Pi \mathfrak{g}^*$. Suppose $r$ is even (for ordinary Lie algebras this is automatic) and $[r,r]=0$, where the bracket here is the Lie--Schouten bracket. The above construction is equivalent in the conventional language to taking the coboundary $dr$ of $r$, where $dr\in C^1(\mathfrak{g},\L^2(\mathfrak{g}))=\Hom(\mathfrak{g},\L^2(\mathfrak{g}))$. The linear map $\d:=dr\co \mathfrak{g}\to \L^2(\mathfrak{g})$ can be regarded as a `cobracket' on the vector space $\mathfrak{g}$ or, dually, a bracket on $\mathfrak{g}^*$. The Jacobi identity and the compatibility with the bracket in $\mathfrak{g}$ follow as above. Thus the pair $(\mathfrak{g}, \mathfrak{g}^*)$ becomes a \emph{Lie bialgebra} (or its super version) in the sense of Drinfeld.
\end{example}

\begin{remark}[`triangular' vs `quasi-triangular'] Let us clarify the terminology. The adjective `triangular'   originates from the `triangle equation', an alternative name for the quantum Yang--Baxter equation for the ``quantum  $R$-matrix'', for which the equation $[r,r]=0$ in a Lie algebra was introduced as a classical analog (the ``classical Yang--Baxter equation''). Lie bialgebras arose as infinitesimal objects for Poisson--Lie (in the original terminology\,---\,Hamilton--Lie) groups introduced by Reiman and Semenov-Tianshansky. When a Lie algebra $\mathfrak{g}$ is simple, the compatibility condition that the cobracket is a $\mathfrak{g}$-cocycle implies that it is a coboundary, i.e., comes from some element $r\in \L^2(\mathfrak{g})$; however, $r$ does \emph{not} have to satisfy  the Yang--Baxter equation $[r,r]=0$ for the Jacobi identity for the cobracket to hold, only the weaker condition that the element $[r,r]$ is $\mathfrak{g}$-invariant. Lie bialgebras defined by   $r$ such that $[r,r]=0$ are called \emph{triangular}; in general, with $[r,r]$   invariant, they are called \emph{quasi-triangular}.
The latter are   main examples of Lie bialgebras.
\end{remark}

The Mackenzie--Xu construction is easily modified to include the case when  the function $r$ in the above setup  satisfies only
a ``generalized Yang--Baxter equation''
\begin{equation*}
   (H_E, \{r,r\}_E)=0
\end{equation*}
instead of the stronger requirement $\{r,r\}_E$=0. We shall call a Lie bialgebroid $(E,E^*)$ obtained this way,   \emph{quasi-triangular}  or \emph{coboundary}. They are a direct generalization of Drinfeld's quasi-triangular Lie bialgebras.

The Mackenzie--Xu construction can be also thought of as an abstract form of the following example (where $E=TM$).

\begin{example}[Lie bialgebroid associated with a Poisson manifold] This is the classical construction of the Koszul bracket (see the previous section). If $M$ is a Poisson (super)manifold with a Poisson tensor $P$, the latter is a fiberwise-quadratic function on $\Pi T^*M$ satisfying
\begin{equation*}
    \lsch P,P\rsch =0\,,
\end{equation*}
where $\lsch \_,\_\rsch$ stands for the canonical Schouten bracket.
Let $D\in \fun(T^*(\Pi TM))$ be the canonical Hamiltonian corresponding   to the de Rham differential on $\Pi TM$ and the Schouten bracket on $\Pi T^*M$. Since $\lsch P,P\rsch=((D,P),P)$, we are in the setup of the Mackenzie--Xu construction applied to the tangent Lie algebroid $TM\to M$. The new Hamiltonian $K:=(D,P)$ defines the Koszul bracket on $\Pi TM$. The pair $(TM,T^*M)$ is a triangular Lie bialgebroid called the \emph{canonical Lie bialgebroid} of a Poisson manifold. See details  in subsection~\ref{subsec.koszul} and    Mackenzie book for the standard exposition.
\end{example}


\subsection{Quasi-triangular $S_{\infty}$- and $P_{\infty}$-structures.}

Our aim will be to show that, exactly as the classical Koszul bracket fits into the Mackenzie--Xu construction of a triangular Lie bialgebroid, our construction of the  $L_{\infty}$-bialgebroid associated with a homotopy Poisson structure as described in subsection~\ref{subsec.highkoszul} generalizes to the notion of a  (quasi)triangular $L_{\infty}$-bialgebroid  (that we shall introduce). To this end, in this subsection,  we start with an abstract construction applicable to   arbitrary homotopy Schouten or homotopy Poisson manifolds.

Let $M$ be a supermanifold.
Consider an odd Hamiltonian $H\in \fun(T^*M)$ satisfying
\begin{equation}\label{eq.masterh}
    (H,H)=0\,,
\end{equation}
for the canonical Poisson bracket on $T^*M$. That means that $H$  specifies a homotopy Schouten (or $S_{\infty}$-) structure on $M$.

Let $r\in \fun(M)$ be a completely arbitrary even function. Let us stress that we do not assume at the moment  that the function $r$ satisfy any equations whatsoever.
Define a new odd Hamiltonian $H'$ by the formula:
\begin{equation}
    H'(x,p):= H\!\Bigl(x,p+\der{r}{x}\Bigr).\label{eq.newh}
\end{equation}
Since $r$ is even, the partial derivatives $\lder{r}{x^a}$ have the correct parities and the substitution $p_a+\lder{r}{x^a}$ for $p_a$ in the argument of $H$ is well-defined.

\begin{theorem} \label{thm.main}
The following holds:
\begin{enumerate}
  \item The function $H'\in \fun(T^*M)$ satisfies
\begin{equation}
    (H',H')=0\,.
\end{equation}
  \item If $r\in \fun(M)$ satisfies the `master equation'
  \begin{equation} \label{eq.masterhamjac}
    H\!\Bigl(x,\der{r}{x}\Bigr)=0,
  \end{equation}
  then the Hamiltonian $H'$ vanishes at the zero section $M\subset T^*M$.
  \item
  For a fiberwise-quadratic $H$, the above master equation~\eqref{eq.masterhamjac} takes the form
  \begin{equation}
    \{r,r\}_H=0\,,
  \end{equation}
  where the bracket is the Schouten bracket defined by $H$; if it is satisfied by the function $r$, then $H'$ takes the `coboundary' form
  \begin{equation}
    H'=H+(H,r)\,.
  \end{equation}
\end{enumerate}
\end{theorem}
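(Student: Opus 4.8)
The three assertions are largely independent; I expect the first to carry the real content, the second to be essentially a restriction, and the third a direct expansion.

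\emph{Assertion (1).} The plan is to recognise the substitution $p_a\mapsto p_a+\lder{r}{x^a}$ as a \emph{canonical transformation} of $T^*M$, so that the invariance of the master equation becomes automatic. Let $\s\co T^*M\to T^*M$ be the fibrewise translation $(x^a,p_a)\mapsto (x^a,\,p_a+\p_a r)$; since $r$ is even this is a well-defined diffeomorphism and, by \eqref{eq.newh}, $H'=\s^{*}H$. The key point is that $\s$ is a symplectomorphism for the canonical form $\o=d(p_a\,dx^a)$. Indeed, on the Liouville $1$-form $\t=p_a\,dx^a$ one finds $\s^{*}\t=(p_a+\p_a r)\,dx^a=\t+\p_a r\,dx^a=\t+dr$, where the reordering $\p_a r\,dx^a=dx^a\,\p_a r=dr$ is sign-free because $r$ is even. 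Hence $\s^{*}\o=d(\s^{*}\t)=\o+d(dr)=\o$ by $d^{2}=0$. A symplectomorphism intertwines the induced Poisson brackets, so
\begin{equation*}
    (H',H')=(\s^{*}H,\s^{*}H)=\s^{*}(H,H)=0
\end{equation*}
by \eqref{eq.masterh}. The only ingredient beyond bilinearity is that the shifting $1$-form $dr$ is \emph{closed}: the same fact, seen through the formula $(F,F)=2\,\der{F}{p_a}\der{F}{x^a}$, is what forces the single correction term in a direct computation---a contraction against the second derivatives $\p_a\p_b r$---to cancel.

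\emph{Assertion (2).} This is immediate once the geometry is in place. The zero section $M\subset T^*M$ is cut out by $p_a=0$, so restricting $H'$ there amounts to setting $p=0$ in \eqref{eq.newh}:
\begin{equation*}
    H'\big|_{M}=H'(x,0)=H\!\Bigl(x,\der{r}{x}\Bigr),
\end{equation*}
which vanishes identically precisely when the master equation \eqref{eq.masterhamjac} holds.

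\emph{Assertion (3).} For fibrewise-quadratic $H=\tfrac12\,H^{ab}(x)\,p_bp_a$ I would simply expand. First, $\der{H}{p_a}=H^{ab}p_b$ (up to the standard sign), so $(H,r)=\pm\,\der{H}{p_a}\der{r}{x^a}=\pm\,H^{ab}p_b\,\p_a r$ is linear in $p$, and consequently $\{r,r\}_H=\bigl((H,r),r\bigr)\big|_{M}=\pm\,H^{ab}\,\p_b r\,\p_a r$. On the other hand the master expression is $H\!\bigl(x,\lder{r}{x}\bigr)=\tfrac12\,H^{ab}\,\p_b r\,\p_a r$, differing from $\{r,r\}_H$ only by the factor $\tfrac12$ and a sign; this yields the stated equivalence of \eqref{eq.masterhamjac} with $\{r,r\}_H=0$. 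For the coboundary form, substitute $p\mapsto p+\p r$ into the quadratic $H$ and sort the result by its homogeneity degree in $p$: the degree-$2$ part is $H$, the degree-$1$ part reassembles (using the symmetry of $H^{ab}$) into $(H,r)$, and the degree-$0$ part is exactly $H\!\bigl(x,\lder{r}{x}\bigr)$, which vanishes under \eqref{eq.masterhamjac}. Hence $H'=H+(H,r)$.

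The one place demanding genuine care is the sign bookkeeping in Assertion~(1); adopting the symplectomorphism viewpoint is precisely what makes it painless, reducing the whole of the self-commutativity to the closedness of $dr$.
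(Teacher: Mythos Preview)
Your proof is correct and follows essentially the same strategy as the paper's: for part~(1) you recognise the shift $p_a\mapsto p_a+\p_a r$ as a symplectomorphism and conclude that the Poisson bracket is preserved, while parts~(2) and~(3) are handled identically (restriction to the zero section and direct expansion of the quadratic case). The only cosmetic difference is that the paper describes the symplectomorphism as the time-one Hamiltonian flow generated by $-r$, whereas you verify it directly via the Liouville form; both observations amount to the same fact.
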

\begin{proof} The shift of   argument of $H(x,p)$ in   formula~\eqref{eq.newh} defining $H'$ is nothing but a particular case of the canonical transformation given by a Hamiltonian flow on $T^*M$ (with the Hamiltonian function $-r$). Hence   the equality $(H',H')=0$ follows from the fact that Hamiltonian flows preserve the canonical Poisson bracket.
This proves part 1. Note that we nowhere used any conditions for the function $r$. Part 2 is obvious by the definition of $H'$. Finally, to prove part 3, consider the case of a quadratic $H$,
\begin{equation*}
    H=\frac{1}{2}\,H^{ab}(x)p_bp_a\,.
\end{equation*}
Then
\begin{equation*}
    H\!\Bigl(x,\der{r}{x}\Bigr)=\frac{1}{2}\,H^{ab}(x)\der{r}{x^b}\der{r}{x^a}=\frac{1}{2}\{r,r\}_H\,,
\end{equation*}
and
\begin{multline*}
    H'(x,p):= H\!\Bigl(x,p+\der{r}{x}\Bigr)=\frac{1}{2}\,H^{ab}(x)\left(p_b+\der{r}{x^b}\right)\left(p_a+\der{r}{x^a}\right)=\\
    \frac{1}{2}\,H^{ab}(x)p_bp_a +
     H^{ab}(x) p_b \der{r}{x^a} +
    \frac{1}{2}\,H^{ab}(x) \der{r}{x^b} \der{r}{x^a}\,.
\end{multline*}
In other words,  for   a quadratic $H$,
\begin{equation*}
    H'=H+(H,r)+\frac{1}{2}\{r,r\}_H\,,
\end{equation*}
and part 3 follows.
\end{proof}

We shall call the new $S_{\infty}$-structure   $H'$ given by~\eqref{eq.newh},  a \emph{quasi-triangular} or \emph{coboundary  $S_{\infty}$-structure} generated by an $S_{\infty}$-structure   $H$ and a \emph{``classical $r$-matrix''} $r$.

The meaning of the additional conditions that may be imposed on $r$ is as follows.

If $r$ satisfies
\begin{equation*}
   Q_H[f]\equiv H\!\Bigl(x,\der{f}{x}\Bigr)=0
\end{equation*}
then $H'(x,0)=0$, hence there is no $0$-bracket in the $S_{\infty}$-structure generated by $H'$. This is a sufficient, but not necessary. A weaker sufficient condition is
\begin{equation*}
    \left(H, Q_H[f]\right)=0\,.
\end{equation*}

Remark: strictly speaking, this is an analog of the Drinfeld double of a quasi-triangular Lie bialgebra (not the Lie bialgebra as such). Another remark: a parameter $t$ can be included, as a factor before $\der{r}{x}$.


Note that without the condition of vanishing of
\begin{equation*}
   Q_H[f]\equiv H\!\Bigl(x,\der{f}{x}\Bigr)
\end{equation*}
(for a quadratic $H$, of vanishing
of
$\{f,f\}_H$), the new odd Hamiltonian $H'$ has a non-zero restriction on $M$. Hence, the $L_{\infty}$-algebra for the new homotopy Schouten structure defined by $H'$ will be \emph{a priori} `with background' (or `curved'). If, however, the equation
\begin{equation*}
    \left(H, Q_H[f]\right)=0
\end{equation*}
is satisfied, then ``the curvature is abelian'', i.e., annihilates all brackets derived from $H$\,---\,in other words, the  $L_{\infty}$ structure will be strict. For quadratic case, the new master  Hamiltonian $H'$ contains only three terms in the momentum expansion, which corresponds to a differential Schouten structure `with background'. There is no background (algebraically) if $(H,\{f,f\}_H)=0$. In the example of Drinfeld's Lie bialgebras, this corresponds to the quasi-triangular case. So there is a terminological question, what should be the good name for the construction of a homotopy Schouten structure given by~\eqref{eq.newh}. Possibly, `coboundary' should be preferred over `triangular' in view of the just said.

Note also that the substitution $p_a+\p_af$ for $p_a$ is a kind of `gauge transformation' (it is an actual gauge transformation in classical mechanics of a charged particle). Since  the shift of the argument formula for  $H'$ above is   a special  canonical transformation , one can imagine a more general canonical transformation applied to $H$ and giving a new `master Hamiltonian'.


\subsection{Graded case. Quasi-triangular $L_{\infty}$-bialgebroids and their odd analog.}

The above obviously works for graded manifolds with appropriate gradings of all involved objects. Of a particular interest for us is the following special setup (which was the starting point of our analysis).

Start from an $L_{\infty}$-algebroid $E$ over $M$ and an   even function $r\in \fun(\Pi E^*)$. Let $Q=Q_E$ be the homological vector field on $\Pi E$ corresponding to the $L_{\infty}$-algebroid structure in $E$. Take the linear Hamiltonian corresponding to it, i.e. the function $H=Q^a(x,\x)p_a+Q^k(x,\x)\pi_k$ on $T^*(\Pi E)$. By applying the Mackenzie--Xu transformation, we obtain a Hamiltonian $H^*\in\fun(T^*(\Pi E^*))$. It defines the $\Sinf$-structure on $\Pi E^*$ (corresponding to the given $L_{\infty}$-algebroid structure in $E$). Perform the shift of the argument of $H^*$ by the derivative of the function $r$ as described in the previous subsection. We shall get a new $\Sinf$-structure. This will introduce into $E$ a structure of a \emph{quasi-triangular $L_{\infty}$-bialgebroid}. One can see that it includes the case of higher Koszul brackets, i.e. the (quasi)triangular $L_{\infty}$-bialgebroid structure induced by an $\Pinf$-structure on $M$, and generalizes the Mackenzie--Xu construction of quasi-triangular Lie bialgebroids and Drinfeld's quasi-triangular Lie bialgebras. 

An odd analog of this construction is based on $\Pi T^*(\Pi E)$ instead of $T^*(\Pi E)$ and the analog of the Mackenzie--Xu transformation introduced in~~\cite{tv:graded}.





\begin{thebibliography}{10}

\bibitem{bashkirov-voronov:lbalg}
D. Bashkirov and A.~A. Voronov.
\newblock On homotopy lie bialgebroids.
\newblock \texttt{arXiv:1612.02026 [math.QA]}.


\bibitem{tv:higherpoisson}
H.~M. Khudaverdian and Th.~Th. Voronov.
\newblock Higher {Poisson} brackets and differential forms.
\newblock In {\em X{XVII} {W}orkshop on {G}eometrical {M}ethods in {P}hysics},
  volume 1079 of {\em AIP Conf. Proc.}, pages 203--215. Amer. Inst. Phys.,
  Melville, NY, 2008.
  
\bibitem{tv:highkosz}
H.~M. Khudaverdian and Th.~Th. Voronov.
\newblock Thick morphisms, higher {K}oszul brackets, and
  {$L_{\infty}$}-algebroids.
\newblock \texttt{arXiv:1808.10049}.


\bibitem{mackenzie:book2005}
Kirill C.~H. Mackenzie.
\newblock {\em General theory of {L}ie groupoids and {L}ie algebroids}, volume
  213 of {\em London Mathematical Society Lecture Note Series}.
\newblock Cambridge University Press, Cambridge, 2005.

\bibitem{mackenzie:bialg}
Kirill C.~H. Mackenzie and Ping Xu.
\newblock Lie bialgebroids and {P}oisson groupoids.
\newblock {\em Duke Math. J.}, 73(2):415--452, 1994.


\bibitem{peddie:thesis}
Matthew~T. Peddie.
\newblock Odd {P}oisson supermanifolds, {C}ourant algebroids, homotopy
  structures, and differential operators.
\newblock PhD thesis. University of Manchester, 2017.

\bibitem{peddie:loday}
Matthew~T. Peddie.
\newblock Homotopy {L}oday algebras and symplectic $2$-manifolds.
\newblock \texttt{arXiv:1804.03025 [math-ph]}.


\bibitem{tv:graded}
Th. Th. Voronov.
\newblock Graded manifolds and {Drinfeld} doubles for {Lie} bialgebroids.
\newblock In Theodore Voronov, editor, {\em Quantization, Poisson Brackets and
  Beyond}, volume 315 of {\em Contemp. Math.}, pages 131--168. Amer. Math.
  Soc., Providence, RI, 2002.

\bibitem{tv:higherder}
Th. Th. Voronov.
\newblock Higher derived brackets and homotopy algebras.
\newblock {\em J. of Pure and Appl. Algebra}, 202(1--3):133--153, 2005.

\bibitem{tv:higherderarb}
Th. Th. Voronov.
\newblock Higher derived brackets for arbitrary derivations.
\newblock In {\em Travaux math\'ematiques. {F}asc. {XVI}}, Trav. Math., XVI,
  pages 163--186. Univ. Luxemb., Luxembourg, 2005.

\bibitem{tv:talk-july2014}
Th. Th. Voronov.
\newblock Homological vector fields, higher {K}oszul brackets and related
  structures.
\newblock Talk at the conference {\emph{Homological Methods in Algebra,
  Geometry and Physics}}, July 23-–25, 2014, City University London.

\bibitem{tv:nonlinearpullback}
Th.~Th. Voronov.
\newblock ``{N}onlinear pullbacks" of functions and {$L_{\infty}$-morphisms}
  for homotopy {P}oisson structures.
\newblock {\em J. Geom. Phys.}, 111:94--110, 2017.
\newblock \texttt{arXiv:1409.6475 [math.DG]}.

\bibitem{tv:microformal}
Th.~Th. Voronov.
\newblock Microformal geometry and homotopy algebras.
\newblock {\em Proc. Steklov Inst. Math.}, 302:88--129, 2018.
\newblock \texttt{https://doi.org/10.1134/S0081543818060056}.


\bibitem{tv:gradedmicro}
Th. Th. Voronov.
\newblock Graded geometry, {$Q$}-manifolds, and microformal geometry.
\newblock {\em Fortschritte der Physik}, 67:1910023.
\newblock \texttt{https://doi.org/10.1002/prop.201910023}.


\end{thebibliography}

\def\cprime{$'$} \def\cprime{$'$}

\end{document}